\definecolor{webgreen}{rgb}{0,.5,0}
\definecolor{webbrown}{rgb}{.6,0,0}
\newtheorem{theorem}{Theorem}
\newtheorem{lemma}[theorem]{Lemma}
\theoremstyle{remark}
\newtheorem{remark}[theorem]{Remark}
\newcommand{\seqnum}[1]{\href{http://oeis.org/#1}{\underline{#1}}}
\newcommand{\CK}{\mathit{CK}}
\newcommand{\goi}{\protect\includegraphics[height=1.4cm,valign=c]{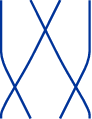}}
\newcommand{\goii}{\protect\includegraphics[height=1.4cm,valign=c]{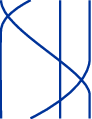}}
\newcommand{\goiii}{\protect\includegraphics[height=1.4cm,valign=c]{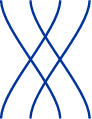}}
\newcommand{\gi}{\protect\includegraphics[height=1.4cm,valign=c]{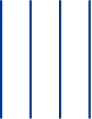}}
\newcommand{\gii}{\protect\includegraphics[height=1.4cm,valign=c]{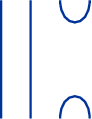}}
\newcommand{\giii}{\protect\includegraphics[height=1.4cm,valign=c]{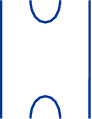}}
\newcommand{\giv}{\protect\includegraphics[height=1.4cm,valign=c]{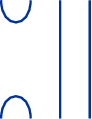}}
\newcommand{\gv}{\protect\includegraphics[height=1.4cm,valign=c]{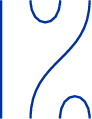}}
\newcommand{\gvi}{\protect\includegraphics[height=1.4cm,valign=c]{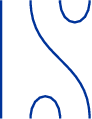}}
\newcommand{\gvii}{\protect\includegraphics[height=1.4cm,valign=c]{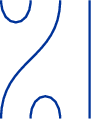}}
\newcommand{\gviii}{\protect\includegraphics[height=1.4cm,valign=c]{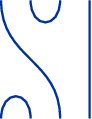}}
\newcommand{\gix}{\protect\includegraphics[height=1.4cm,valign=c]{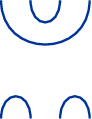}}
\newcommand{\gx}{\protect\includegraphics[height=1.4cm,valign=c]{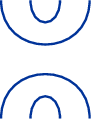}}
\newcommand{\gxi}{\protect\includegraphics[height=1.4cm,valign=c]{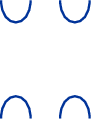}}
\newcommand{\gxii}{\protect\includegraphics[height=1.4cm,valign=c]{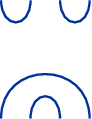}}
\newcommand{\gxiii}{\protect\includegraphics[height=1.4cm,valign=c]{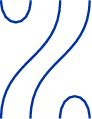}}
\newcommand{\gxiv}{\protect\includegraphics[height=1.4cm,valign=c]{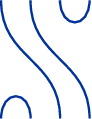}}
\newcommand{\gicII}{\protect\includegraphics[width=2cm,valign=c]{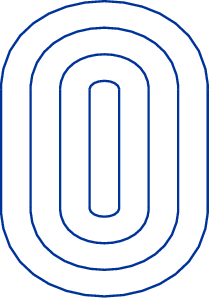}}
\newcommand{\giicII}{\protect\includegraphics[width=2cm,valign=c]{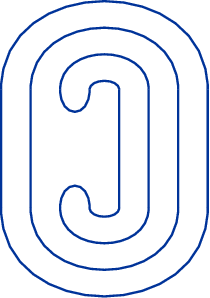}}
\newcommand{\giiicII}{\protect\includegraphics[width=2cm,valign=c]{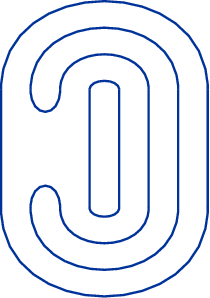}}
\newcommand{\givcII}{\protect\includegraphics[width=2cm,valign=c]{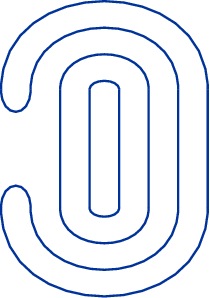}}
\newcommand{\gvcII}{\protect\includegraphics[width=2cm,valign=c]{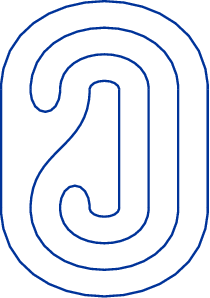}}
\newcommand{\gvicII}{\protect\includegraphics[width=2cm,valign=c]{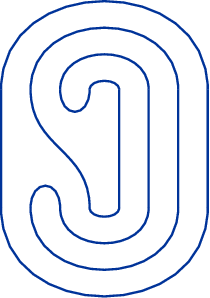}}
\newcommand{\gviicII}{\protect\includegraphics[width=2cm,valign=c]{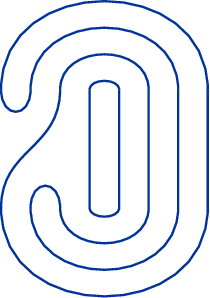}}
\newcommand{\gviiicII}{\protect\includegraphics[width=2cm,valign=c]{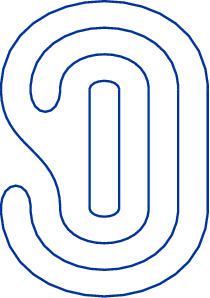}}
\newcommand{\gixcII}{\protect\includegraphics[width=2cm,valign=c]{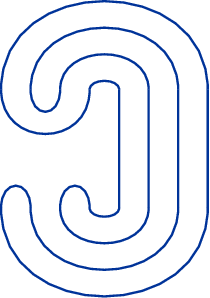}}
\newcommand{\gxcII}{\protect\includegraphics[width=2cm,valign=c]{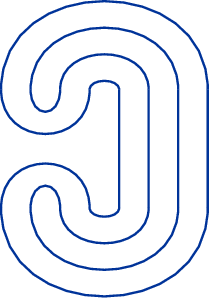}}
\newcommand{\gxicII}{\protect\includegraphics[width=2cm,valign=c]{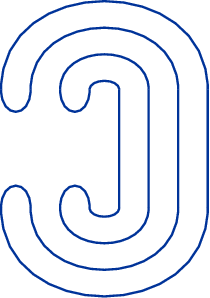}}
\newcommand{\gxiicII}{\protect\includegraphics[width=2cm,valign=c]{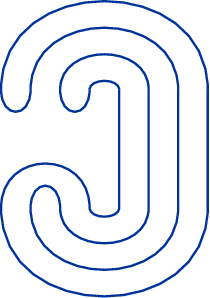}}
\newcommand{\gxiiicII}{\protect\includegraphics[width=2cm,valign=c]{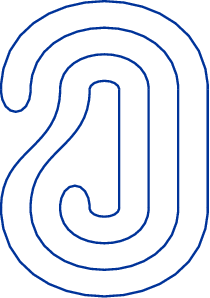}}
\newcommand{\gxivcII}{\protect\includegraphics[width=2cm,valign=c]{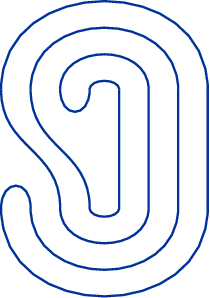}}
\def\namedlabel#1#2{\begingroup
	#2%
	\def\@currentlabel{#2}%
	\phantomsection\label{#1}\endgroup
}
\title{\bf A note on the recursive computation of the bracket polynomial for closed 4-tangles}
\author{Franck Ramaharo\\
	\small Département de  Mathématiques et Informatique\\[-0.8ex]
	\small Université d'Antananarivo\\[-0.8ex] 
	\small 101 Antananarivo, Madagascar\\
	\small\href{mailto:franck.ramaharo@gmail.com}{\tt franck.ramaharo@gmail.com}\\
}
\date{\small August 25, 2025\\}
\newcommand{\brk}[1]{\left<#1\right>}
\begin{document}
	\maketitle
	
	\begin{abstract}
		Given a 4-tangle shadow, we concatenate it with itself $n$ times and form a knot by applying a closure operation that connects each top endpoint to the corresponding bottom endpoint on the same side without introducing any crossings. We then compute the Kauffman bracket polynomial for the resulting knot using a states matrix defined with respect to the basis of the Kauffman 4-strand diagram monoid.	
		
		\bigskip\noindent  {Keywords:} states matrix, 4-tangle, knot shadow, Kauffman state.
	\end{abstract}
	
	In this note, we compute the bracket polynomial for the closure of an iterated 4-tangle shadow, $T_n = TT\cdots T$ (that is, $T$ concatenated with itself $n$ times), via the states matrix. Given a tangle shadow $T$ (from this point onward, all tangles are understood to be shadow diagrams, and we will simply refer to them as tangles), its bracket polynomial is a linear combination of the form 
	\begin{equation}\label{eq:bracket1}
		\brk{T} = \brk{\protect\includegraphics[height=1.5cm,valign=c]{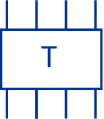}} = \sum_{i=1}^{14} a_i \brk{g_i},
	\end{equation}
	where $g_i$ are the element basis of the monoid diagram $\mathcal{K}_4$ \cite{KitovNikov2020} as shown in Figure~\ref{fig:basisstates}, $a_i\in \mathbb{Z}[x]$, and the bracket polynomial $\brk{\ \cdot\ }$ is defined by the following axioms
	\begin{itemize}
		\item [\namedlabel{itm:K1}{$ (\mathbf{K1}) $}:]  $\left<\bigcirc \right>=x $;
		\item [\namedlabel{itm:K2}{$ (\mathbf{K2}) $}:]  $\left<\bigcirc\sqcup D'\right>=x\left< D'\right>$;
		\item [\namedlabel{itm:K3}{$ (\mathbf{K3}) $}:]  $\left<\protect\includegraphics[width=.0225\linewidth,valign=c]{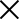}\right>=\left<\protect\includegraphics[width=.0225\linewidth,valign=c]{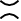}\right>+\left<\protect\includegraphics[width=.0225\linewidth,valign=c]{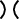}\right>$.
	\end{itemize}
	\begin{figure}[!ht]
		\centering
		\makebox[0pt]{\begin{tabular}{wc{2cm}wc{2cm}wc{2cm}wc{2cm}wc{2cm}wc{2cm}wc{2cm}}
				\gi & \gii & \giii & \giv & \gv & \gvi & \gvii\\
				$g_1$ & $g_2$ &$g_3$ &$g_4$ &$g_5$ &$g_6$ &$g_7$\\[3ex]
				\gviii & \gix & \gx & \gxi & \gxii & \gxiii & \gxiv\\
				$g_8$ & $g_9$ &$g_{10}$ &$g_{11}$ &$g_{12}$ &$g_{13}$ &$g_{14}$
		\end{tabular}}
		\caption{The states basis element of $\mathcal{K}_4$}
		\label{fig:basisstates}
	\end{figure}
	
	The concatenation of two 4-tangles $T$ and $T'$, denoted $T T'$, is the 4-tangle formed by placing $T$  directly above $T'$  and connecting the bottom endpoints of $T$  to the corresponding top endpoints of $T'$  with non-crossing planar arcs, see Figure~\ref{fig:concatenation}. We are particularly interested in the iteration of a generator tangle $T$, denoted $T_n := TT\cdots T$ ($n$ times). If $a_i^{(n)}$ denote the coefficients of the  $n$-th iteration, $i=1,\ldots,14$, then the associated bracket polynomial is expressed as \begin{equation}\label{eq:bracketn}
		\brk{T_n}:=\sum_{i=1}^{14} a_i^{(n)}\left<g_i\right>.
	\end{equation}
	
	For a tangle $T$, the closure for $T$ under consideration in this note, denoted $\overline{T}$, is formed by connecting each top endpoint to the corresponding bottom endpoint on the same side  without introducing any crossings as depicted in Figure~\ref{fig:tangleclosure}.
	
	\begin{figure}[H]
		\centering
		\makebox[0pt]{\begin{subfigure}[b]{0.6\textwidth}
				\centering
				$T:=\protect\includegraphics[height=1.5cm,valign=c]{tangleT},\qquad T':=\protect\includegraphics[height=1.5cm,valign=c]{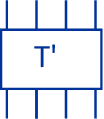},\quad  TT' = \protect\includegraphics[height=3cm,valign=c]{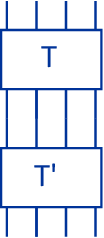}$
				\caption{Concatenation of two $4$-tangles}
				\label{fig:concatenation}
			\end{subfigure}%
			\begin{subfigure}[b]{0.4\textwidth}
				\centering
				\includegraphics[height=4cm]{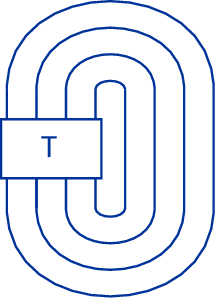}
				\caption{Closure of a $4$-tangle}
				\label{fig:tangleclosure}
			\end{subfigure}%
		}
		\caption{Operations on $4$-tangles}
	\end{figure}
	
	\begin{lemma}\label{lem:closure}
		The bracket polynomial for the closure of $T$ is given by:\begin{equation}
			\left<\overline{T}\right> = a_{1}x^4 +  \left(a_2 + a_3 + a_4 \right)x^3 + (a_5 + a_6 + a_7 + a_{8} + a_{10} + a_{11})x^2 +(a_{9} + a_{12} + a_{13} + a_{14})x.
		\end{equation}
		
	\end{lemma}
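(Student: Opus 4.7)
The plan is to exploit the linearity of the bracket polynomial together with the fact that the closure of every basis element $g_i$ is a crossingless diagram, hence a disjoint union of simple loops whose bracket is determined entirely by \ref{itm:K1} and \ref{itm:K2}.

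First I would invoke linearity. Axiom \ref{itm:K3} is local, so substituting the expression \eqref{eq:bracket1} for $\brk{T}$ into the closed diagram $\overline{T}$ and iteratively resolving crossings commutes with the closure operation. This yields
\begin{equation*}
\brk{\overline{T}} \;=\; \sum_{i=1}^{14} a_i\,\brk{\overline{g_i}}.
\end{equation*}
Thus the lemma reduces to evaluating the fourteen brackets $\brk{\overline{g_i}}$ individually.

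Next, for each basis diagram $g_i$ (the non-crossing pairings of the eight boundary points depicted in Figure~\ref{fig:basisstates}), I would draw the four non-crossing arcs prescribed by the closure of Figure~\ref{fig:tangleclosure} and count the number $c_i$ of connected components (simple loops) of the resulting planar curve. Since the diagram has no crossings, repeated application of \ref{itm:K2} together with \ref{itm:K1} gives $\brk{\overline{g_i}} = x^{c_i}$. Concretely: the identity diagram $g_1$ closes to four disjoint circles, so $c_1=4$; the diagrams $g_2,g_3,g_4$, each containing exactly one cap-cup pair on one side, close to three loops, giving $c_i=3$; the diagrams $g_5,g_6,g_7,g_8,g_{10},g_{11}$ close to two loops; and the remaining $g_9,g_{12},g_{13},g_{14}$ close to a single loop. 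Collecting the terms according to the exponent $c_i$ produces precisely the stated polynomial.

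The main obstacle, which is purely combinatorial rather than conceptual, is carrying out the loop-count correctly for each of the fourteen basis elements, since a miscount in even one diagram would shift the coefficient of some $a_i$ by a factor of $x^{\pm 1}$. To guard against this I would verify each count in two ways: by direct inspection of the closed planar diagram, and by using the fact that the multiplication $g_i \cdot \mathbf{1}$ in the Kauffman monoid, with $\mathbf{1}$ denoting the identity under the chosen closure pairing, produces a scalar $x^{c_i-1}$ times a loop pattern whose trace matches the picture. Once each $c_i$ is verified, the formula follows by grouping the summands of $\sum a_i x^{c_i}$ according to the value of $c_i$.
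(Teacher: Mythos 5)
Your proof is correct and follows essentially the same route as the paper: expand $\brk{\overline{T}}=\sum_{i=1}^{14}a_i\brk{\overline{g_i}}$ by linearity and evaluate each $\brk{\overline{g_i}}$ as $x^{c_i}$ by counting the loops of the crossingless closure, and your loop counts ($c_1=4$; $c_2,c_3,c_4=3$; $c_5,\ldots,c_8,c_{10},c_{11}=2$; $c_9,c_{12},c_{13},c_{14}=1$) agree with the values the paper records in Figure~\ref{fig:Dclosure}. The only difference is presentational: the paper tabulates the fourteen closures pictorially, while you argue the counts in prose.
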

	\begin{figure}[H]
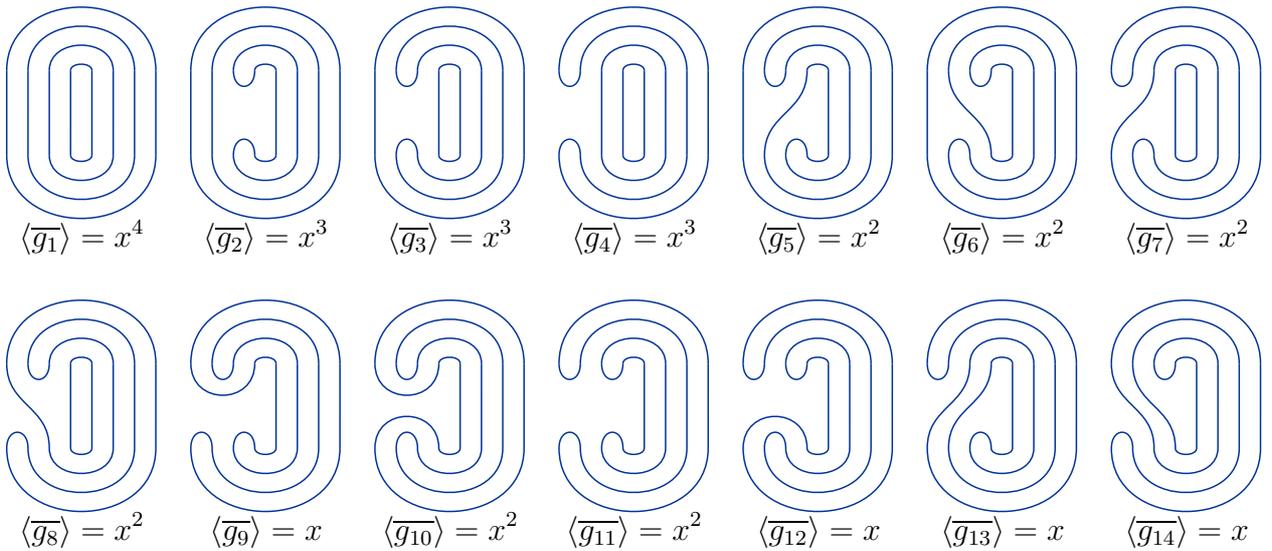

		\centering
		\makebox[0pt]{\begin{tabular}{ccccccc}
				\gicII                       &  \giicII                      &    \giiicII                     &  \givcII                       & \gvcII                       &  \gvicII                        &    \gviicII \\[5ex]
				$\brk{\overline{g_1}}=x^4$   &  $\brk{\overline{g_2}}=x^3$   &	$\brk{\overline{g_3}}=x^3$     &   $\brk{\overline{g_4}}=x^3$   & $\brk{\overline{g_5}}=x^2$   &  $\brk{\overline{g_6}}=x^2$     &	$\brk{\overline{g_7}}=x^2$ \\[3ex]
				\gviiicII                    &  \gixcII                      &    \gxcII                       &  \gxicII                       & \gxiicII                     &  \gxiiicII                      &   \gxivcII  \\[3ex]
				$\brk{\overline{g_{8}}}=x^2$ &  $\brk{\overline{g_9}}=x$     &	$\brk{\overline{g_{10}}}=x^2$  &  $\brk{\overline{g_{11}}}=x^2$ & $\brk{\overline{g_{12}}}=x$  &   $\brk{\overline{g_{13}}}=x$   &	$\brk{\overline{g_{14}}}=x$  \\
		\end{tabular}}
		\caption{Closures of the elements of $\mathcal{K}_4$}
		\label{fig:Dclosure}
	\end{figure}
	
	\begin{proof}
		The bracket polynomial  for the closure of $T$ is evaluated on the elements of $\mathcal{K}_4$, i.e.,
		\begin{equation}\label{eq:closuredefinitionD}
			\left<\overline{T}\right> = \sum_{i=1}^{14}a_i\left<\overline{g_i}\right>.
		\end{equation}
		Values of the $\brk{\overline{g_i}}$'s are given in Figure~\ref{fig:Dclosure}.
	\end{proof}

	\begin{lemma}\label{lem:concatbracket}
		If $T$ and $T'$ are two 4-tangles such that $\brk{T} = \sum_{i=1}^{14}a_i\brk{g_i}$ and $\brk{T'} = \sum_{i=1}^{14}b_i\brk{g_i}$, then $\brk{TT'}$ satisfies
		\begingroup
		\allowdisplaybreaks
		\begin{align*}
			\brk{TT'} & =   b_{1} a_{1}\brk{g_1}\\
			& + \left(b_{2} a_{1}  + \left(b_{1}  + b_{5} + b_{2} x\right) a_{2} + \left(b_{2} + b_{13} + b_{5} x\right) a_{6} + \left(b_{5} + b_{13} x\right)  a_{14}\right) \brk{g_{2}}\\
			& + \left(b_{3} a_{1}  + \left(b_{1}  + b_{6} + b_{7} + b_{3} x\right) a_{3} +\left(b_{3} + b_{6} x\right) a_{5}   + \left(b_{3} + b_{7} x\right) a_{8}\right) \brk{g_{3}}\\
			& + \left(b_{4} a_{1}  + \left(b_{1}  + b_{8} + b_{4} x\right) a_{4} + \left(b_{4} + b_{14} + b_{8} x\right) a_{7} + \left(b_{8} + b_{14} x\right) a_{13}\right) \brk{g_{4}}\\ 
			& + \left(b_{5} a_{1}  + \left(b_{2}  + b_{13} + b_{5} x\right) a_{3} + \left(b_{1} + b_{5} + b_{2} x\right) a_{5} + \left(b_{5} + b_{13} x\right) a_{8}\right) \brk{g_{5}} \\
			& + \left(b_{6} a_{1}  + \left(b_{3}  + b_{6} x\right)  a_{2} + \left(b_{1} + b_{6} + b_{7} + b_{3} x\right) a_{6} + \left(b_{3} + b_{7} x\right) a_{14} \right) \brk{g_{6}} \\
			& + \left(b_{7} a_{1}  + \left(b_{3}  + b_{7} x\right)  a_{4} + \left(b_{1} + b_{6} + b_{7} + b_{3} x\right) a_{7} + \left(b_{3} + b_{6} x\right) a_{13} \right) \brk{g_{7}}\\
			& + \left(b_{8} a_{1}  + \left(b_{4}  + b_{14} + b_{8} x\right)  a_{3}   + \left(b_{8} + b_{14} x\right) a_{5}     + \left(b_{1} + b_{8} + b_{4} x\right) a_{8}\right) \brk{g_{8}}\\
			& + \left(b_{9} a_{1}  + \left(b_{11} + b_{9} x\right)  a_{3}            + \left(b_{4} + b_{9} + b_{13} + b_{11} x\right)  a_{5} + \left(b_{2} + b_{9} + b_{14} + b_{11} x\right) a_{8} \right.\\
			& \quad +       \left(b_{1} + b_{5} + b_{8}  + \left(b_{2} + b_{4} + b_{9} + b_{13} + b_{14}\right) x + b_{11} x^2\right) a_{9} \\
			& \quad + \left.\left(b_{2} + b_{4} + b_{13} + b_{14} + \left(b_{5} + b_{8} + b_{11}\right) x         + b_{9} x^2\right) a_{10} \right) \brk{g_{9}} \tag{\theequation}\label{eq:Tn1}\\
			& + \left(b_{10} a_{1}  + \left(b_{12} + b_{10} x\right) a_{3}           + \left(b_{7} + b_{10} + b_{12} x\right) a_{5}   + \left(b_{6} + b_{10} + b_{12} x\right) a_{8} \right.     \\
			& \quad   + \left. \left(b_{3} + \left(b_{6} + b_{7} + b_{10}\right) x + b_{12} x^2\right) a_{9}                          + \left(b_{1} + b_{6} + b_{7} + \left(b_{3} + b_{12}\right) x + b_{10} x^2\right) a_{10} \right) \brk{g_{10}}\\ 
			& + \left(b_{11} a_{1} + \left(b_{4} + b_{9} + b_{13} + b_{11} x\right) a_{2}  + \left(b_{2} + b_{9} + b_{14} + b_{11} x\right) a_{4}    + \left(b_{11} + b_{9} x\right) a_{6} \right.\\
			& \quad +       \left(b_{11} + b_{9} x\right)  a_{7}  + \left(b_{1} + b_{5} + b_{8}   + \left(b_{2} + b_{4} + b_{9}  + b_{13} + b_{14}\right) x + b_{11} x^2\right) a_{11} \\
			& \quad +       \left(b_{2}  + b_{4} + b_{13} + b_{14} + \left(b_{5} + b_{8} + b_{11}\right) x +  b_{9} x^2\right)  a_{12} + \left(b_{4} + b_{9} + b_{13} + b_{11} x\right) a_{13} \\
			& \quad + \left.\left(b_{2} + b_{9} + b_{14} + b_{11} x\right) a_{14} \right) \brk{g_{11}}\\
			& + \left(b_{12} a_{1} + \left(b_{7} + b_{10} + b_{12} x\right) a_{2}      + \left(b_{6} + b_{10} + b_{12} x\right)  a_{4}     + \left(b_{12} + b_{10} x\right)  a_{6}          +  \left(b_{12} + b_{10} x\right) a_{7}\right.\\
			& \quad +       \left(b_{3} + \left(b_{6} + b_{7} + b_{10}\right) x +  b_{12} x^2\right)  a_{11}                               + \left(b_{1} + b_{6} + b_{7} + \left(b_{3} + b_{12}\right) x + b_{10} x^2\right) a_{12}\\
			& \quad + \left.\left(b_{7} + b_{10} + b_{12} x\right) a_{13} + \left(b_{6} + b_{10} + b_{12} x\right) a_{14} \right) \brk{g_{12}} \\
			& + \left(b_{13} a_{1} + \left(b_{5} + b_{13} x\right) a_{4}  + \left(b_{2} + b_{13} + b_{5} x\right) a_{7} + \left(b_{1} + b_{5} + b_{2} x\right) a_{13} \right) \brk{g_{13}}\\ 
			& + \left(b_{14} a_{1} + \left(b_{8} + b_{14} x\right) a_{2}  + \left(b_{4} + b_{14} + b_{8} x\right) a_{6} + \left(b_{1} + b_{8} + b_{4} x\right) a_{14} \right) \brk{g_{14}}.
		\end{align*}
		\endgroup
	\end{lemma}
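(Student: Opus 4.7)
The plan is to reduce the claim to a purely combinatorial multiplication table in the Kauffman monoid $\mathcal{K}_4$. First I would establish a bilinear distributivity: since the state expansions $\brk{T} = \sum_{i=1}^{14} a_i \brk{g_i}$ and $\brk{T'} = \sum_{j=1}^{14} b_j \brk{g_j}$ are obtained by applying axiom \ref{itm:K3} at the crossings of $T$ and of $T'$ independently, and since the smoothings in $T$ do not interact with those in $T'$, the state sum for the concatenation factorizes as
\begin{equation*}
    \brk{TT'} \;=\; \sum_{i=1}^{14}\sum_{j=1}^{14} a_i\,b_j \,\brk{g_i g_j}.
\end{equation*}
Thus the problem reduces to evaluating the $14\times 14$ table of brackets $\brk{g_i g_j}$.

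Next I would observe that each $g_i g_j$ is the vertical concatenation of two planar matchings on $4+4$ endpoints, so after gluing the middle strands the resulting diagram is again a planar matching $g_{\mu(i,j)} \in \mathcal{K}_4$ together with a (possibly empty) disjoint union of $c(i,j)\geq 0$ closed circles. Iterating axiom \ref{itm:K2} on each circle yields
\begin{equation*}
    \brk{g_i g_j} \;=\; x^{c(i,j)}\,\brk{g_{\mu(i,j)}},
\end{equation*}
and substitution gives
\begin{equation*}
    \brk{TT'} \;=\; \sum_{k=1}^{14}\,\Biggl(\,\sum_{\substack{1\leq i,j\leq 14\\ \mu(i,j)=k}} a_i\,b_j\,x^{c(i,j)}\Biggr)\brk{g_k}.
\end{equation*}
The lemma is precisely the assertion that these inner sums match the fourteen coefficient expressions displayed in the statement.

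The verification therefore amounts to producing the multiplication table $(\mu(i,j),c(i,j))$ for $\mathcal{K}_4$, drawing each $g_i$ atop each $g_j$, tracing the merged strands, counting closed loops, and reading off the resulting basis element. The workload can be cut substantially: $g_1$ is the monoid identity, giving one full row and one full column immediately, and the horizontal-reflection symmetries relating pairs such as $(g_2,g_3,g_4)$, $(g_5,g_6,g_7,g_8)$, and $(g_9,g_{10})\leftrightarrow (g_{11},g_{12})$ allow many products to be deduced from previously computed ones.

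The main obstacle is not conceptual but organizational: the statement packages $14^2 = 196$ diagrammatic products into $14$ polynomials of moderate length in $x$, and one must carefully collect, for each fixed output index $k$, every pair $(i,j)$ mapping to $k$ together with its loop count. I expect the cases contributing to $\brk{g_9}$, $\brk{g_{10}}$, $\brk{g_{11}}$, $\brk{g_{12}}$ to be the most delicate, because these are the basis elements with two nested cups where an internal strand can close off, so entries with $c(i,j)=2$ (producing the $x^2$ terms visible in those four coefficients) must be correctly identified. Once the table is tabulated, matching the collected sums against the displayed formula is a term-by-term check.
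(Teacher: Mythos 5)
Your proposal follows essentially the same route as the paper: expand bilinearly to get $\brk{TT'}=\sum_{i,j}a_ib_j\brk{g_ig_j}$, then evaluate each product $g_ig_j$ in $\mathcal{K}_4$ as a basis element times a power of $x$ counting the closed loops (the paper records exactly this data in its concatenation table) and collect coefficients by output basis element. The extra remarks on exploiting the identity element $g_1$ and diagram symmetries are sound organizational shortcuts and do not change the argument.
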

	
	\begin{proof}
		We first establish the states of $ T $ leaving $ T' $ intact, and then in $ T' $:
		\begin{equation}
			\brk{TT'} = \sum_{i=1}^{14}a_i\sum_{j=1}^{14}b_j \brk{g_i g_j}.
		\end{equation}
		The brackets for the pairs $(g_i, g_j)$ can be evaluated by applying the concatenation table in Table~\ref{tab:concattable}. 
		
	\end{proof}
	
	\begin{table}
		\centering
		\makebox[0pt]{\includegraphics[width=1.05\linewidth]{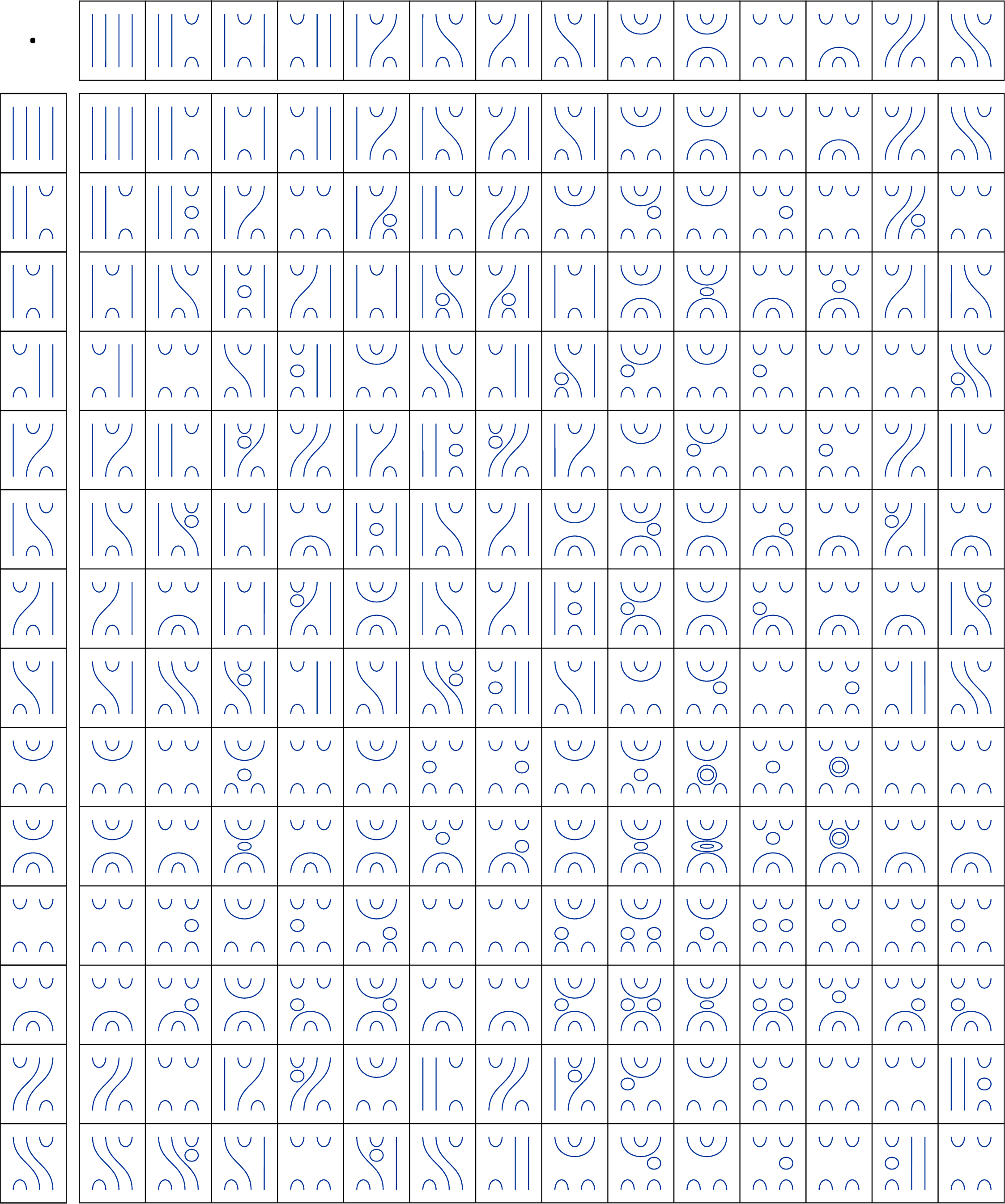}}
		\caption{Concatenation table of $g_i$ and $g_{j}$. Entry in row $j$ and column $i$ gives the resulting state  from the concatenation $g_ig_{j}$, with $i=1,\ldots,14$  indexing the column labels (top row) and $j=1,\ldots 14$  indexing the column labels (leftmost column).}
		\label{tab:concattable}
	\end{table}
	
	For convenience, we identify the bracket polynomial expression in \eqref{eq:bracket1}, and similarly that of \eqref{eq:bracketn}, with the bracket vectors
	\[
	A:=A(T) = \left[ a_1, a_2, a_3, a_4, a_5, a_6, a_7, a_8, a_9, a_{10}, a_{11}, a_{12}, a_{13}, a_{14} \right]^\intercal
	\]
	and
	\[
	A_n:=A(T_n) = \left[ a_1^{(n)}, a_2^{(n)}, a_3^{(n)}, a_4^{(n)}, a_5^{(n)}, a_6^{(n)}, a_7^{(n)}, a_8^{(n)}, a_9^{(n)}, a_{10}^{(n)}, a_{11}^{(n)}, a_{12}^{(n)}, a_{13}^{(n)}, a_{14}^{(n)} \right]^\intercal,
	\] respectively.

	\begin{lemma} For $n\geq 1$,  $A_n$ satisfies the following recursion:		
		
		\begin{equation}\label{eq:matrixformulation}
			\renewcommand\arraystretch{1.325}
			\scriptstyle
			\begin{pmatrix}
				a_1^{(n)}\\ 
				a_2^{(n)}\\ 
				a_3^{(n)}\\ 
				a_4^{(n)}\\ 
				a_5^{(n)}\\ 
				a_6^{(n)}\\ 
				a_7^{(n)}\\ 
				a_8^{(n)}\\ 
				a_9^{(n)}\\ 
				a_{10}^{(n)}\\ 
				a_{11}^{(n)}\\ 
				a_{12}^{(n)}\\ 
				a_{13}^{(n)}\\ 
				a_{14}^{(n)}
			\end{pmatrix}=
			\begin{pmatrix}
				a_{1}  & 0      & 0      & 0      & 0      & 0      & 0      & 0      & 0      & 0      & 0      & 0      & 0      & 0 \\
				a_{2}  & u_{1}  & 0      & 0      & 0      & u_{12} & 0      & 0      & 0      & 0      & 0      & 0      & 0      & u_{6} \\
				a_{3}  & 0      & u_{10} & 0      & u_{2}  & 0      & 0      & u_{4}  & 0      & 0      & 0      & 0      & 0      & 0 \\
				a_{4}  & 0      & 0      & u_{13} & 0      & 0      & u_{8}  & 0      & 0      & 0      & 0      & 0      & u_{3}  & 0 \\
				a_{5}  & 0      & u_{12} & 0      & u_{1}  & 0      & 0      & u_{6}  & 0      & 0      & 0      & 0      & 0      & 0 \\
				a_{6}  & u_{2}  & 0      & 0      & 0      & u_{10} & 0      & 0      & 0      & 0      & 0      & 0      & 0      & u_{4} \\
				a_{7}  & 0      & 0      & u_{4}  & 0      & 0      & u_{10} & 0      & 0      & 0      & 0      & 0      & u_{2}  & 0 \\
				a_{8}  & 0      & u_{8}  & 0      & u_{3}  & 0      & 0      & u_{13} & 0      & 0      & 0      & 0      & 0      & 0 \\
				a_{9}  & 0      & u_{5}  & 0      & u_{11} & 0      & 0      & u_{16} & u_{18} & u_{19} & 0      & 0      & 0      & 0 \\
				a_{10} & 0      & u_{9}  & 0      & u_{14} & 0      & 0      & u_{15} & u_{17} & u_{20} & 0      & 0      & 0      & 0 \\
				a_{11} & u_{11} & 0      & u_{16} & 0      & u_{5}  & u_{5}  & 0      & 0      & 0      & u_{18} & u_{19} & u_{11} & u_{16} \\
				a_{12} & u_{14} & 0      & u_{15} & 0      & u_{9}  & u_{9}  & 0      & 0      & 0      & u_{17} & u_{20} & u_{14} & u_{15} \\
				a_{13} & 0      & 0      & u_{6}  & 0      & 0      & u_{12} & 0      & 0      & 0      & 0      & 0      & u_{1}  & 0 \\
				a_{14} & u_{3}  & 0      & 0      & 0      & u_{8}  & 0      & 0      & 0      & 0      & 0      & 0      & 0      & u_{13}
			\end{pmatrix}\begin{pmatrix}
				a_1^{(n-1)}\\ 
				a_2^{(n-1)}\\ 
				a_3^{(n-1)}\\ 
				a_4^{(n-1)}\\ 
				a_5^{(n-1)}\\ 
				a_6^{(n-1)}\\ 
				a_7^{(n-1)}\\ 
				a_8^{(n-1)}\\ 
				a_9^{(n-1)}\\ 
				a_{10}^{(n-1)}\\ 
				a_{11}^{(n-1)}\\ 
				a_{12}^{(n-1)}\\ 
				a_{13}^{(n-1)}\\ 
				a_{14}^{(n-1)}
			\end{pmatrix}
		\end{equation}
		where \[u_i:=u_i(a_1, a_2, a_3, a_4, a_5, a_6, a_7, a_8, a_9, a_{10}, a_{11}, a_{12}, a_{13}, a_{14}),\ i=1,\ldots,19,\] with
		\[\begin{array}{@{}wl{6.5cm}wl{5.cm}wl{5.cm}@{}}
			u_1 := a_{1} + a_{5} + a_{2} x,              & u_2 := a_{3} + a_{6} x,              & u_3 := a_{8} + a_{14} x, \\
			u_4 := a_{3} + a_{7} x,                      & u_5 := a_{11} + a_{9} x,             & u_6 := a_{5} + a_{13} x,\\
			u_7 := a_{14} + a_{4} + a_{8} x,             & u_8 := a_{12} + a_{10} x,            & u_9 := a_{1} + a_{6} + a_{7} + a_{3} x,\\
			u_{10} := a_{13} + a_{4} + a_{9} + a_{11} x, & u_{11} := a_{13} + a_{2} + a_{5} x,  & u_{12} := a_{1} + a_{8} + a_{4} x, \\ 
			u_{13} := a_{10} + a_{7} + a_{12} x,         & u_{14} := a_{10} + a_{6} + a_{12} x, & u_{15} := a_{14} + a_{2} + a_{9} + a_{11} x, \\ 
			\multicolumn{2}{@{}l}{u_{16} := a_{3} + (a_{10} + a_{6} + a_{7}) x + a_{12} x^2,}\\
			\multicolumn{2}{@{}l}{u_{17} := a_{1} + a_{5} + a_{8} +( a_{13} + a_{14} + a_{2} + a_{4} + a_{9}) x + a_{11} x^2,} \\
			\multicolumn{2}{@{}l}{u_{18} := a_{13} + a_{14} + a_{2} + a_{4} + (a_{11} + a_{5} + a_{8}) x + a_{9} x^2,}\\
			\multicolumn{2}{@{}l}{u_{19} := a_{1} + a_{6} + a_{7} + (a_{12} + a_{3}) x + a_{10} x^2.}
		\end{array}\]		
	\end{lemma}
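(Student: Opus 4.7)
The plan is to derive the recursion as a direct consequence of Lemma~\ref{lem:concatbracket}, applied to the decomposition $T_n = T_{n-1}\,T$ afforded by associativity of concatenation. I would cast $T_{n-1}$ in the role of the upper tangle $T$ of the lemma (so its coefficients $a_i^{(n-1)}$ take the place of the lemma's ``$a_i$'') and the generator $T$ in the role of the lower tangle $T'$ (so its coefficients $a_j$ take the place of the lemma's ``$b_j$''). Substituting into the fourteen displayed lines of Lemma~\ref{lem:concatbracket} immediately expresses $\brk{T_n}=\sum_{i=1}^{14}a_i^{(n)}\brk{g_i}$ as a $\mathbb{Z}[x]$-linear combination of the $\brk{g_i}$'s, in which each coefficient is a sum of at most four products of some $a_j^{(n-1)}$ with a polynomial in $a_1,\dots,a_{14}$ and $x$.

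For each index $i\in\{1,\dots,14\}$, I would then reorder the resulting expression attached to $\brk{g_i}$ as a single linear combination $\sum_{j=1}^{14} c_{ij}\,a_j^{(n-1)}$ whose coefficients $c_{ij}\in\mathbb{Z}[x][a_1,\dots,a_{14}]$ can be read off directly from Lemma~\ref{lem:concatbracket}. By construction these satisfy $a_i^{(n)} = \sum_{j=1}^{14} c_{ij}\,a_j^{(n-1)}$, which is the $i$-th row of the matrix identity \eqref{eq:matrixformulation}. The polynomials $u_1,\dots,u_{19}$ are simply a symbolic shorthand for those $c_{ij}$ that recur at several positions of the matrix.

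Two sanity checks frame the argument. First, the entries of the first column should be $(a_1,\dots,a_{14})^\intercal$: this reflects the fact that $a_1^{(n-1)}$ is the coefficient of the identity element $g_1\in\mathcal{K}_4$, and concatenating an identity tangle with $T$ simply yields $T$, so its contribution to $a_i^{(n)}$ is $a_i$. Second, with the natural initial value $A_0 = (1,0,\dots,0)^\intercal$ (the bracket vector of the identity tangle $g_1$), the recursion returns $A_1 = A$, confirming that the formula is correctly initialised and that the base case is trivially consistent.

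The main obstacle is bookkeeping rather than ideas: the proof reduces to regrouping fourteen rows of Lemma~\ref{lem:concatbracket} according to the fourteen variables $a_j^{(n-1)}$ and identifying the resulting $14\times 14$ coefficient array, column by column, with the entries of the matrix in \eqref{eq:matrixformulation} and the abbreviations $u_1,\dots,u_{19}$. No additional topological or algebraic reasoning is required beyond what is already encoded in Lemma~\ref{lem:concatbracket} and Table~\ref{tab:concattable}.
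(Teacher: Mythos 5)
Your proposal is correct and is essentially the paper's own proof: the paper likewise writes $\brk{T_n}=\brk{T_{n-1}T}$, applies Lemma~\ref{lem:concatbracket} with its $a_i$ replaced by $a_i^{(n-1)}$ and its $b_j$ by the generator's coefficients $a_j$, and obtains the matrix \eqref{eq:matrixformulation} by regrouping the coefficient of each $\brk{g_i}$ as a linear form in the $a_j^{(n-1)}$. The only quibble is your phrase ``at most four products'': the lines attached to $\brk{g_9}$ through $\brk{g_{12}}$ in Lemma~\ref{lem:concatbracket} contribute six to nine such terms, which affects only the bookkeeping count, not the argument.
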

	
	\begin{proof}
		We write
		\[\brk{T_n} = \brk{T_{n-1} T}=\sum_{i=1}^{14}a_i^{(n-1)}\sum_{j=1}^{14} a_j \brk{g_i g_j},\] then we apply Lemma~\ref{lem:concatbracket}. The matrix expression follows from \eqref{eq:Tn1} by setting $T=T'$.
	\end{proof}
	Let $M = M(a_1, a_2, a_3, a_4, a_5, a_6, a_7, a_8, a_9, a_{10}, a_{11}, a_{12}, a_{13}, a_{14})$ denote the $14\times14$ matrix in \eqref{eq:matrixformulation}, and be referred to as the \textit{states matrix}. Then \eqref{eq:matrixformulation} also reads
	\begin{equation}\label{eq:matrixform}
		A_{n} = M A_{n-1} = M^n A_0,
	\end{equation} with $$A_0 = \left[1,0,0,0,0,0,0,0,0,0,0,0,0,0\right]^\intercal.$$
	
	\begin{theorem}
		Let $\overline{X}:=\left[x^4, x^3, x^3, x^3, x^2, x^2, x^2, x^2, x, x^2, x^2, x, x, x\right]^\intercal$. Then 
		the bracket polynomial for the closure of $T_n$  is given by
		\begin{equation}
			\left<\overline{T_n}\right> = A_n\cdot \overline{X},
		\end{equation}
		where $\cdot$ denotes the dot product.	
	\end{theorem}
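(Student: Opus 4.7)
The plan is to observe that the theorem is essentially a direct repackaging of Lemma~\ref{lem:closure}, combined with the fact that the coefficient vector of $T_n$ is $A_n$ by definition. First I would verify entry by entry that the vector $\overline{X}$ defined in the statement encodes exactly the closure values $\brk{\overline{g_i}}$ recorded in Figure~\ref{fig:Dclosure}: the $i$-th component of $\overline{X}$ matches $\brk{\overline{g_i}}$ for each $i=1,\ldots,14$. In particular, the entries at positions $1$, $2$–$4$, $5$–$8$ together with $10$ and $11$, and $9$, $12$, $13$, $14$ carry $x^4$, $x^3$, $x^2$, and $x$ respectively, which matches the grouping of coefficients in Lemma~\ref{lem:closure}.

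Next I would apply Lemma~\ref{lem:closure} not to a generic tangle $T$ but to the iterate $T_n$. Since by \eqref{eq:bracketn} the coefficients of $\brk{T_n}$ with respect to the basis $\{\brk{g_i}\}$ are precisely the entries $a_i^{(n)}$ of $A_n$, the closure formula specializes to
\[
\left<\overline{T_n}\right> = \sum_{i=1}^{14} a_i^{(n)}\,\brk{\overline{g_i}}.
\]
Substituting $\brk{\overline{g_i}} = \overline{X}_i$ from the previous step rewrites the right-hand side as the dot product $A_n \cdot \overline{X}$, which is the claimed identity.

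There is no substantive obstacle here, as all analytic content has already been absorbed into Lemma~\ref{lem:closure} and the matrix recursion \eqref{eq:matrixform}. The only care needed is in the bookkeeping: one must confirm that the ordering of entries in $\overline{X}$ agrees with the indexing of the basis $g_1,\ldots,g_{14}$ used throughout, which amounts to reading off Figure~\ref{fig:Dclosure} in order. Finally, I would note that combining the result with \eqref{eq:matrixform} gives the explicit closed form $\left<\overline{T_n}\right> = (M^n A_0) \cdot \overline{X}$, which is what makes the formulation useful for iterative computation.
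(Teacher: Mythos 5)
Your proposal is correct and follows the paper's own route: the paper proves the theorem exactly by invoking Lemma~\ref{lem:closure} together with the recursion \eqref{eq:matrixform}, which is what you spell out in detail. Your entry-by-entry check that $\overline{X}$ records the values $\brk{\overline{g_i}}$ from Figure~\ref{fig:Dclosure} is just the bookkeeping the paper leaves implicit.
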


	\begin{proof}
		This is a consequence of Lemma~\ref{lem:closure} and formula \eqref{eq:matrixform}.
	\end{proof}

	As an application, we compute bracket polynomials of the three closed 4-strand braided designs illustrated in Figure~\ref{fig:4braid}: the 4-strand square Turk's head knot \cite[\#1322]{Ashley1994}, the 4-strand plate sinnet \cite[\#2959]{Ashley1994}, and the 4-strand flat sinnet \cite[\#2974]{Ashley1994}. 
	
	The corresponding generator tangles are \goi,\quad \goii,\quad and \goiii, respectively.
	
	\begin{figure}[H]
		\centering
		\begin{subfigure}[t]{0.33\textwidth}
			\centering
			\includegraphics[height=4.5cm]{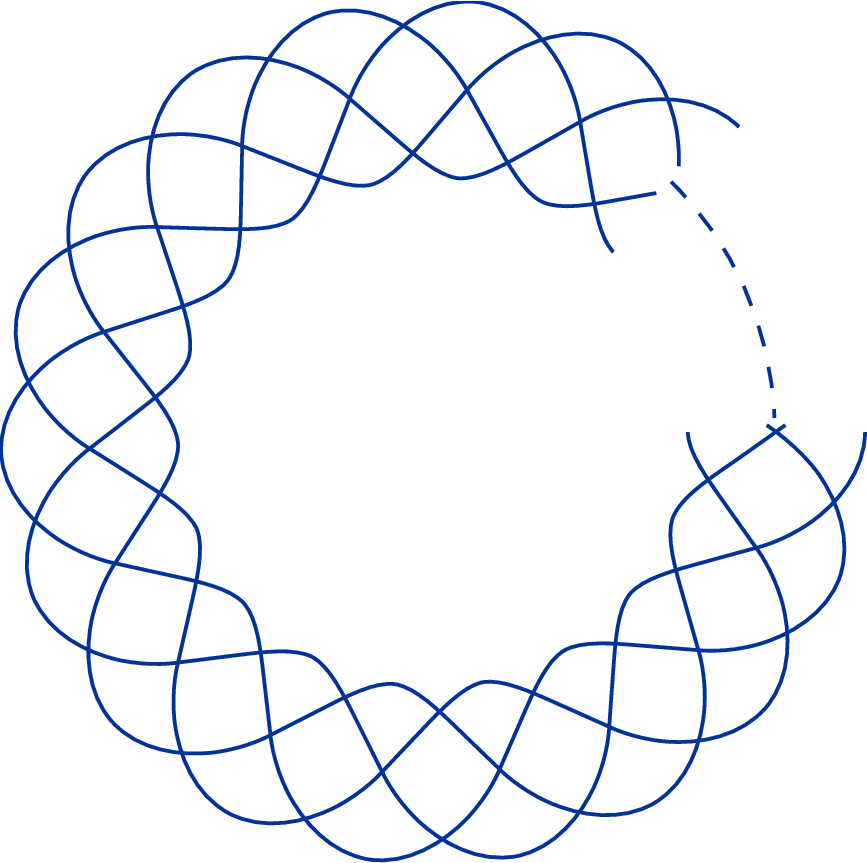}
			\caption{4-strand square Turk's head}
		\end{subfigure}%
		\begin{subfigure}[t]{0.33\textwidth}
			\centering
			\includegraphics[height=4.5cm]{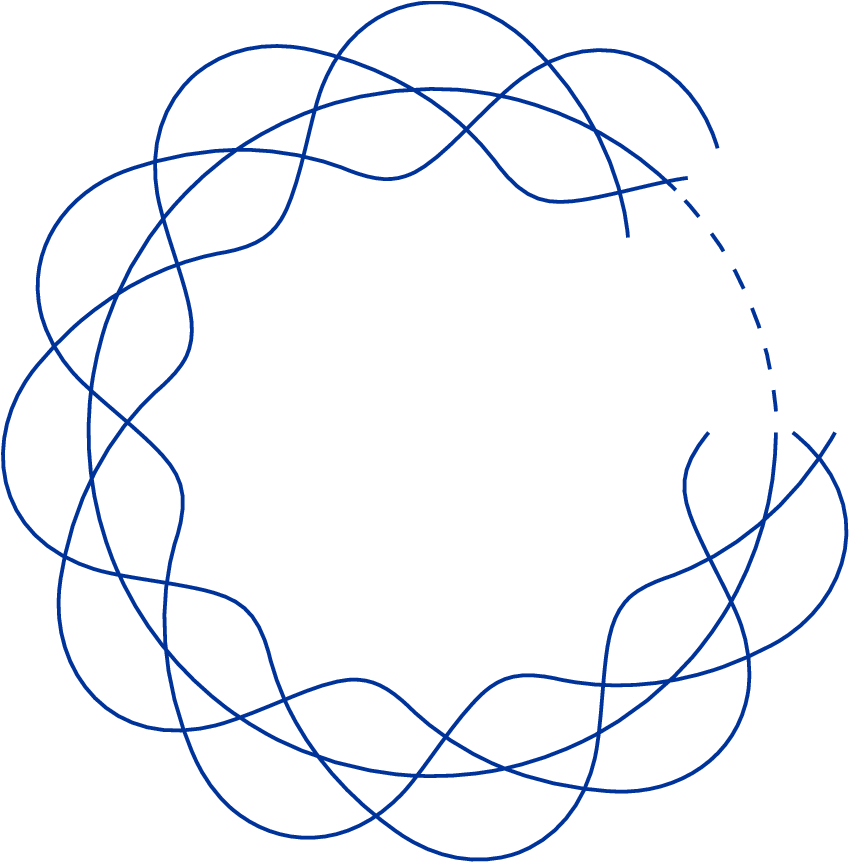}
			\caption{4-strand plat sinnet}
		\end{subfigure}%
		\begin{subfigure}[t]{0.33\textwidth}
			\centering
			\includegraphics[height=4.5cm]{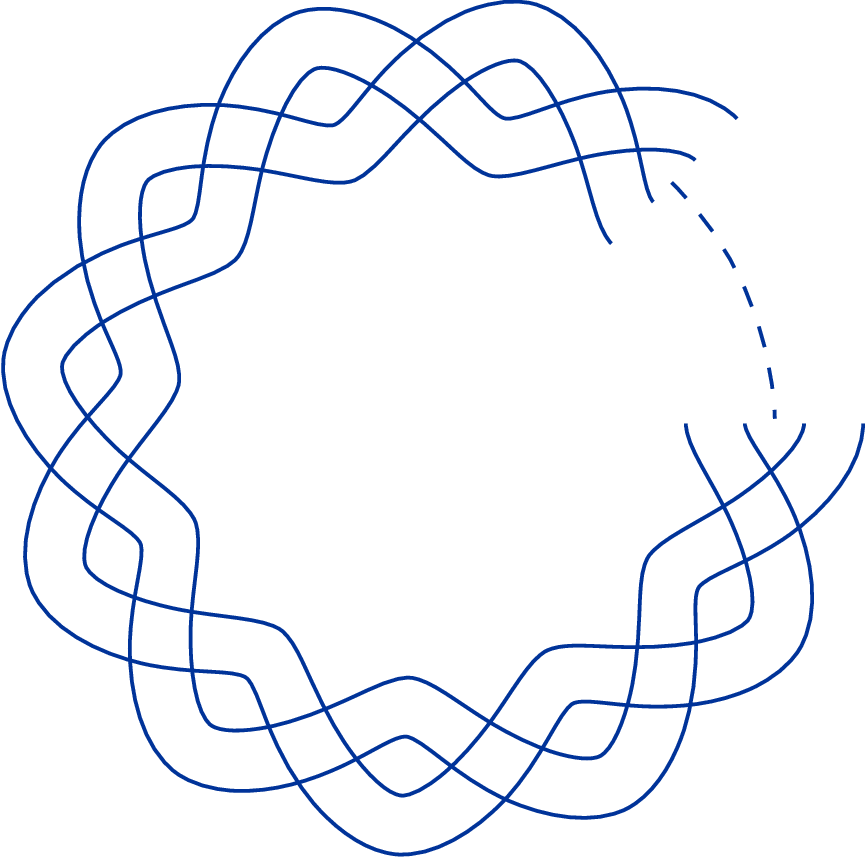}
			\caption{4-strand flat sinnet}
		\end{subfigure}%
		\caption{Closed 4-strand braided designs}
		\label{fig:4braid}
	\end{figure}
	
	In the following, the bracket polynomial is expressed in terms of the coefficients of its expansion for the first few values of $n$.
	\begin{enumerate}
		\item \textbf{4-strand square Turk's head knot}
		\begin{itemize}
			\item Bracket for the generator:
			
			\begin{align*}
				\hspace*{\dimexpr-\leftmargini-\leftmarginii}
				\brk{T} = \brk{\goi} & = \brk{\gix} + \brk{\gviii} + \brk{\gv}  + \brk{\giii}\\[2ex] 
				& = \brk{\gxi} + \brk{\giv}   + \brk{\gii} + \brk{\gi}.
			\end{align*}
			\item States matrix: 
			\[M(1,1,1,1,1,0,0,1,1,0,1,0,0,0).\]
			\item Bracket polynomial for ${T_n}$: 
			
			\begin{table}[H]
				\centering
				\makebox[0pt]{\resizebox{\linewidth}{!}{%
						\begin{tabular}{@{}c|lllllllllllll@{}}
							$n\setminus k$ & 0 & 1 & 2 & 3 & 4 & 5 & 6 & 7 & 8 & 9 & 10 & 11 & 12 \\	
							\midrule
							0 & 0 & 0    & 0     & 0     & 1\\
							1 & 0 & 1    & 3     & 3     & 1\\
							2 & 0 & 12   & 27    & 20    & 5\\
							3 & 0 & 75   & 188   & 169   & 67    & 12    & 1\\
							4 & 0 & 384  & 1148  & 1352  & 826   & 300   & 73    & 12    & 1\\
							5 & 0 & 1805 & 6417  & 9595  & 8101  & 4451  & 1755  & 518   & 110  & 15  & 1\\
							6 & 0 & 8100 & 33687 & 61566 & 66706 & 49172 & 26829 & 11310 & 3690 & 906 & 159 & 18 & 1\\
							& $\ldots$
				\end{tabular}}}
				\caption{Values of $\left[x^k\right]$ $\left<T_n\right>$ for small values of $n$}
				\label{tab:squareturkshead}
			\end{table}
		\end{itemize}
		
		\item \textbf{4-strand plat sinnet}
		\begin{itemize}
			\item Bracket for the generator:
			
			\begin{align*}
				\hspace*{\dimexpr-\leftmargini-\leftmarginii}
				\brk{T}=\left<\goii\right> & = \brk{\gi}  + \brk{\gii}       + (x+3)\brk{\giii} + \brk{\giv}  + \brk{\gv} \\[2ex]
				& + \brk{\gvi} + (x+3)\brk{\gvii} + \brk{\gxi}       + \brk{\gxii} + \brk{\gxiii}.
			\end{align*}
			
			\item States matrix:
			\[
			M(1, 1, x + 3, 1, 1, 1, x + 3, 0, 0, 0, 1, 1, 1, 0).
			\]
			\item Bracket polynomial for ${T_n}$:
			\begin{table}[H]
				\centering
				\makebox[0pt]{\resizebox{\linewidth}{!}{%
						\begin{tabular}{@{}c|lllllllllllll@{}}
							$n\setminus k$ & 0 & 1 & 2 & 3 & 4 & 5 & 6 & 7 & 8 & 9 & 10 & 11 & 12 \\	
							\midrule
							0 & 0 &  0     &  0      &  0      &  1 \\
							1 & 0 &  2     &  6      &  6      &  2 \\
							2 & 0 &  36    &  91     &  84     &  36     & 8      &  1 \\
							3 & 0 &  384   &  1148   &  1352   &  826    & 300    &  73    &  12    &  1 \\
							4 & 0 &  3528  &  12985  &  19904  &  16723  & 8608   &  2930  &  712   &  129  &  16   &  1\\
							5 & 0 &  30250 &  134910 &  260540 &  287276 & 201082 &  94435 &  31006 &  7465 &  1390 &  201 &  20 &  1\\
							& $\ldots$
				\end{tabular}}}
				\caption{Values of $\left[x^k\right]$ $\left<T_n\right>$ for small values of $n$}
				\label{tab:platsinnet}
			\end{table}
			
		\end{itemize}
		
		\item \textbf{4-strand flat sinnet}
		\begin{itemize}
			\item Bracket for the generator:
			\begingroup
			\allowdisplaybreaks
			\begin{align*}
				\hspace*{\dimexpr-\leftmargini-\leftmarginii}
				\brk{T} =	\left<\goiii\right> & = \brk{\gi} + \brk{\gii} + (x+4)\brk{\giii} + \brk{\giv} \\[2ex]
				& + \brk{\gv} + \brk{\gvi} + \brk{\gvii}      + \brk{\gviii} + \brk{\gix}\\[2ex]
				& + \brk{\gx} + \brk{\gxi} + \brk{\gxii}.
			\end{align*}
			\endgroup
			
			\item States matrix:
			\[M(1, 1, x + 4, 1, 1, 1, 1, 1, 1, 1, 1, 1, 0, 0).\]
			\item Bracket polynomial for ${T_n}$:
			\begin{table}[H]
				\centering
				\makebox[0pt]{\resizebox{\linewidth}{!}{%
						\begin{tabular}{@{}c|lllllllllllll@{}}
							$n\setminus k$ & 0 & 1 & 2 & 3 & 4 & 5 & 6 & 7 & 8 & 9 & 10 & 11 & 12\\	
							\midrule
							0 & 0 & 0     & 0     & 0      & 1\\
							1 & 0 & 2     & 6     & 6      & 2\\
							2 & 0 & 32    & 88    & 88     & 39     & 8      & 1\\
							3 & 0 & 294   & 1000  & 1364   & 961    & 378    & 86     & 12     & 1\\
							4 & 0 & 2304  & 9824  & 17904  & 18257  & 11424  & 4516   & 1120   & 170   & 16   & 1\\
							5 & 0 & 16810 & 87474 & 202410 & 274726 & 242402 & 145215 & 59686  & 16555 & 2960 & 317 & 20 & 1\\
							& $\ldots$
				\end{tabular}}}
				\caption{Values of $\left[x^k\right]$ $\left<T_n\right>$ for small values of $n$}
				\label{tab:flatsinnet}
			\end{table}
		\end{itemize}
	\end{enumerate}
	
	\begin{remark}\mbox{ }
		\begin{itemize}
			\item The closed 4-strand Turk's head knot is topologically equivalent to the closed circular Celtic knot $\CK_4^{2n}$. The component values of the bracket vector were computed in earlier work \cite{Ramaharo2025}; thus, by applying \eqref{eq:closuredefinitionD}, the bracket polynomial for the closure simplifies to
			\begin{equation}\label{eq:ClosedCelticclosedform}
				\begin{aligned}
					\left<\overline{T_n}\right> & = \left(x^2-1\right)\left((x+1)^n + \left(x+2-\sqrt{2 x+3}\right)^n + \left(x+2+\sqrt{2 x+3}\right)^n\right)\\
					& + \left(\frac{1}{2} \left(x^2+4 x+4-\sqrt{x^4+4 x^3+12 x^2+20 x+12}\right)\right)^n\\
					& + \left(\frac{1}{2} \left(x^2+4 x+4+\sqrt{x^4+4 x^3+12 x^2+20 x+12}\right)\right)^n + x^4 - 3x^2 + 1.
				\end{aligned}
			\end{equation}
			
			\item The sequence $ \left(\left[x\right] \left<T_n\right>\right)_n$ (Column  1) in Table~\ref{tab:squareturkshead} matches sequence \seqnum{A006235} in the OEIS \cite{Sloane2025}. Empirically, the initial values in Columns 1 of Tables~\ref{tab:platsinnet} and~\ref{tab:flatsinnet} match sequences \seqnum{A193127} and \seqnum{A212797}, respectively.
			
			\item 	For $n=1$, the bracket polynomials of the two 4-strand sinnets are identical. This can be seen in Figure~\ref{fig:statesexplication}: after smoothing specific crossings, the states of the two knots become identical up to isotopy on the sphere $\mathsf{S}^2$ (see Figure~\ref{fig:statesexplication}). Let $\brk{T_1}$ and $\brk{T_1'}$ denote the bracket polynomials of the respective 4-strand sinnets. Then,
			\begin{align*}
				\brk{T_1} = \brk{T_1'} & = (x+1)\brk{\protect\includegraphics[height=1.cm,valign=c]{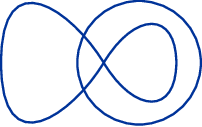}}\quad \mbox{by axiom \ref{itm:K2}}\\
				& = (x+1)(2x^3 + 4x^2 + 2x)\\
				& = 2x^4 + 6x^3 + 6x^2 + 2x.
			\end{align*} 
		\end{itemize}

		\begin{figure}[!ht]
			\centering
			\includegraphics[width=.8\linewidth]{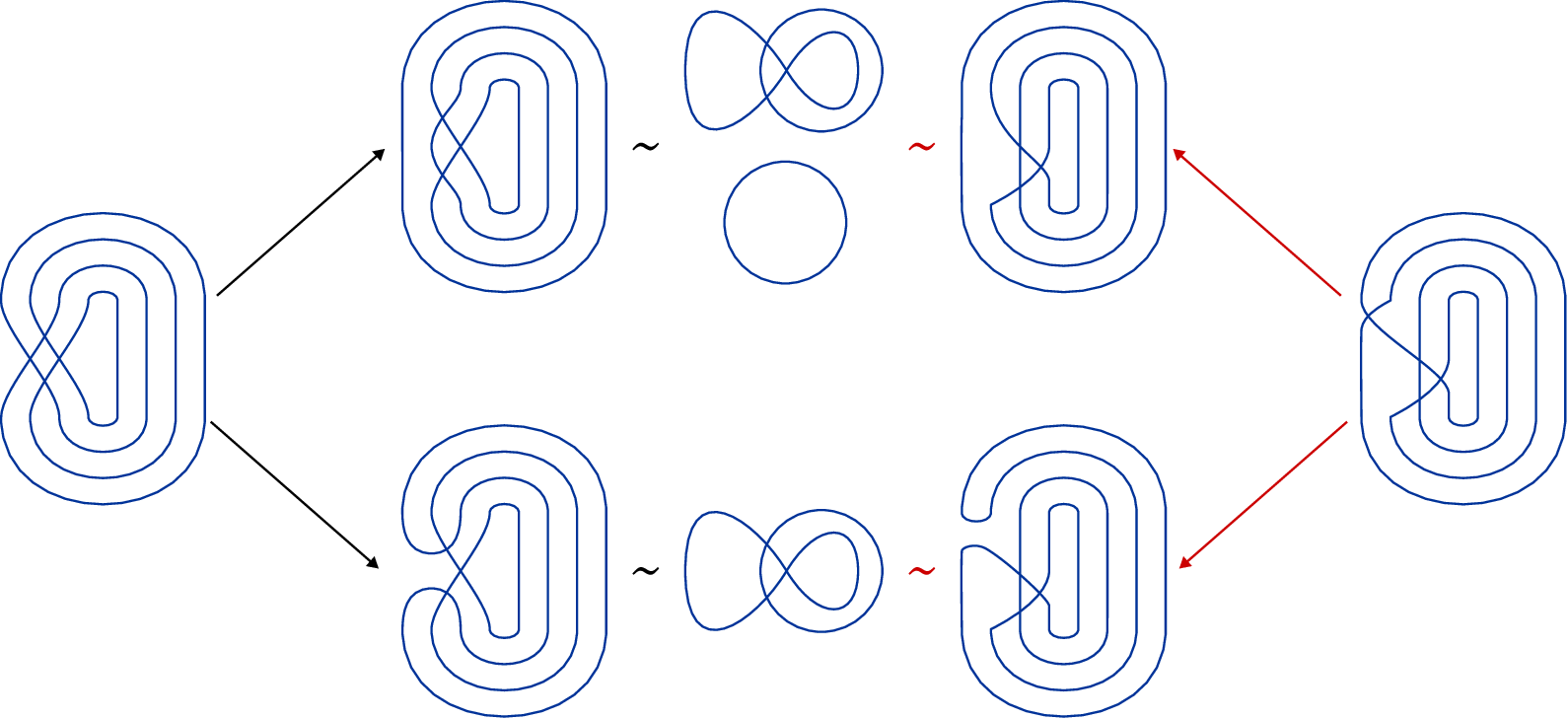}
			\caption{Isotopic equivalence of states for the 4-strand sinnets}
			\label{fig:statesexplication}
		\end{figure}
	\end{remark}

	\bigskip
	\small \textbf{2010 Mathematics Subject Classifications}:  57M25; 05A19.
\end{document}